\numberwithin{equation}{section}
\theoremstyle{definition}
\newtheorem{theorem}{Theorem}[section]
\newtheorem{definition}[theorem]{Definition}
\newtheorem{proposition}[theorem]{Proposition}
\newtheorem{lemma}[theorem]{Lemma}
\newcommand{\R}{\mathbb{R}}
\DeclareMathOperator{\vol}{Vol}
\DeclareMathOperator{\lsp}{span}
\begin{document}


\title{Counting Ancient Solutions on A Strip with Exponential Growth}
\author{Feng Gui}
\address{Department of Mathematics, Massachusetts Institute of Technology, Cambridge, MA 02139, USA}
\email{fenggui@mit.edu}

\begin{abstract}
  We study the ancient solutions of parabolic equations on an infinite strip. We show that any polynomial growth ancient solution for a class of parabolic equations must be constant. Furthermore, we show that the vector space of ancient solutions that grow slower than a fixed exponential order is of finite dimension.
\end{abstract}

\maketitle


\section{Introduction}

It has been a common theme that the function theory on a manifold usually reflects the geometric properties. One of the most well-known examples is the classical study of meromorphic functions on Riemann Surfaces. In this paper, we are interested in the ancient caloric functions, that is, solutions of the heat equation that exist for all past time. We show that any nonzero ancient caloric function on an infinite strip has at least exponential growth. This is different from what happens on a Euclidean space or on a complete manifold with nonnegative Ricci curvature where ancient solutions with polynomial growth exist. Furthermore, we bound the dimension of ancient caloric functions with exponential growth on a strip in a similar way to how the dimension of polynomial growth harmonic functions is bounded on a Euclidean space.

Let $\Omega = \R \times \Omega_0$ be an infinite strip where $\Omega_0 \subset \R^n$ is an open bounded set with volume $V_0$. We will use the standard coordinates $x = (x_0,...,x_n)$ on $\R^{n+1}$ where $x_0$ axis is the direction that the strip goes to infinity. Consider a uniformly elliptic operator $L$ in the divergence form, that is,
\begin{equation}
  Lu = \partial_i(a^{ij} \partial_j u) + b^i \partial_i u + cu,
\end{equation}
where $a^{ij}(x)$, $b^i(x)$ and $c(x)$ are bounded on $\Omega$ and there exist $0 < \lambda < \Lambda$ such that
\begin{equation}\label{ellipticity}
  \lambda |\xi|^2 \leq a^{ij}(x)\xi_i\xi_j \text{ and } |a^{ij}(x)| \leq \Lambda
\end{equation}
for all $x\in \Omega$ and $\xi \in \R^{n+1}$. The Einstein summation convention is applied here and henceafter. We also need to assume some smallness condition in the first and zeroth order terms, that is, for some $\varepsilon>0$,
\begin{equation}\label{smallness}
  |b^i(x)|\leq \sqrt{\varepsilon}\text{ and }c(x) \leq \varepsilon.
\end{equation}
The small constant $\varepsilon$ will be determined depending on $n$, $\lambda$ and $V_0$ in Lemma \ref{Lem: rev Poincare}. The Laplacian of course satisfies this condition and it is the motivating example for our study.

We consider the solutions $u(t,x)$ to the following problem:
\begin{equation}\label{eqn1}
  \begin{cases}
    (\partial_t - L)u = 0 & \text{for } (t,x)\in \R^-\times \Omega \\
    u = 0 & \text{for } (t,x)\in \R^-\times \partial \Omega
  \end{cases}.
\end{equation}
where we denote $\R^- = (-\infty, 0]$. Such solution is called an ancient solutions as it is defined for all negative time. For $d\geq 1$, we define the solution spaces of polynomial and exponential growth
\begin{definition}\label{def: solution space}
  \begin{equation}
    \begin{aligned}
      P_d := \{&u(t,x)\in W^{1,2}_{loc}(\R^-\times \Omega), u \text{ solves equation }(\ref{eqn1}) \text{ and}\\
               &\text{ there exists } C = C(u)>0 \text{ s.t. } |u(t,x)| \leq C(|x|+|t|^\frac{1}{2}+1)^d\},
    \end{aligned}
  \end{equation}
  \begin{equation}
    \begin{aligned}
      E_d := \{&u(t,x)\in W^{1,2}_{loc}(\R^-\times \Omega), u \text{ solves equation }(\ref{eqn1}) \text{ and}\\
               &\text{ there exists } C = C(u)>0 \text{ s.t. } |u(t,x)| \leq Ce^{d(|x|+|t|^\frac{1}{2})}\}.
    \end{aligned}
  \end{equation}
\end{definition}

Our main results are the following.
\begin{theorem}\label{main thm 1}
  If $L$ satisfies conditions (\ref{ellipticity}) and (\ref{smallness}) and $d>0$, then $P_d = \{0\}$.
\end{theorem}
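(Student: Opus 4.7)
The plan is to prove this by a Caccioppoli–plus–cross-sectional-Poincaré iteration. The key mechanism is the Dirichlet condition on $\R^-\times\partial\Omega$: because $\Omega_0$ is a bounded open set, for every $v\in W_0^{1,2}(\Omega_0)$ one has a cross-sectional Poincaré inequality $\int_{\Omega_0} v^2 \leq C_{P}(V_0,n)\int_{\Omega_0}|\nabla_{x'} v|^2$. This will let us convert a reverse Poincaré inequality in the ``longitudinal'' variables $(t,x_0)$ into actual decay of the $L^2$ mass on parabolic cylinders, rather than the usual energy estimate. Lemma \ref{Lem: rev Poincare} is designed precisely to produce this decay, and the smallness condition (\ref{smallness}) on $b$ and $c$ is there exactly so that the lower order terms in the Caccioppoli computation can be absorbed after applying cross-sectional Poincaré.

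Concretely, let
\[
   A_R \;=\;\{(t,x)\in \R^-\times\Omega \;:\; |x_0|\leq R,\;\; -R^2 \leq t \leq 0\},\qquad F(R)\;=\;\int_{A_R} u^2\,dx\,dt .
\]
\textbf{Step 1 (growth bound).} The assumption $u\in P_d$ gives $|u|\leq C(R+1)^d$ on $A_R$, and since $\mathrm{Vol}(A_R)\leq 2V_0 R^3$ we get $F(R)\leq C_1 R^{2d+3}$ for $R\geq 1$. \textbf{Step 2 (reverse Poincaré).} I would apply Lemma \ref{Lem: rev Poincare} to $u$ on the pair $A_R\subset A_{2R}$, yielding
\[
   F(R)\;\leq\; \frac{C}{R^2}\,F(2R)
\]
for all $R\geq R_0$, with $C=C(n,\lambda,\Lambda,V_0)$ independent of $R$. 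The proof of this is a parabolic Caccioppoli against $u\eta^2$ with $\eta$ a smooth cutoff in $(t,x_0)$ only (so that $u\eta^2$ still satisfies the Dirichlet condition), after which the left-hand side $\int \eta^2|\nabla u|^2$ is promoted to $\int \eta^2 u^2$ by the cross-sectional Poincaré inequality above; the $(b,c)$ terms are absorbed using (\ref{smallness}). \textbf{Step 3 (iteration).} Iterating the inequality $k$ times,
\[
   F(R)\;\leq\;\frac{C^k}{R^{2k}\,2^{k(k-1)}}\,F(2^kR)\;\leq\;C_1\,R^{2d+3}\,\Bigl(\tfrac{C}{R^2}\Bigr)^{k}\,2^{k(2d+3)-k(k-1)}.
\]
\textbf{Step 4 (conclusion).} For any fixed $R\geq R_0$, the $-k^2$ in the exponent dominates as $k\to\infty$, so $F(R)=0$, and since $R$ is arbitrary, $u\equiv 0$.

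The main obstacle is Step 2, i.e.\ obtaining the quadratic factor $R^{-2}$ (a linear gain would only give $F(R)\leq\theta F(2R)$ for some $\theta<1$, which is not enough to kill polynomial growth of arbitrary degree). This is where one must combine the Caccioppoli-type estimate in $(t,x_0)$ with the Poincaré inequality on the cross-section; the cutoff's spatial derivative contributes an $R^{-2}$ factor, and one has to be careful that the time-derivative of $\eta$, which scales like $R^{-2}$ in parabolic cutoffs, does not destroy the estimate. Once Lemma \ref{Lem: rev Poincare} is in hand, the rest is a direct telescoping iteration, and the smallness conditions (\ref{smallness}) are used only through that lemma.
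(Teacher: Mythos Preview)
Your approach is correct and essentially the same as the paper's: both rest on Lemma~\ref{Lem: rev Poincare} (whose proof you accurately sketch via Caccioppoli plus cross-sectional Poincar\'e), obtaining $F(r)\leq C(R-r)^{-2}F(R)$ and then iterating against the polynomial bound on $F$. The only cosmetic difference is the iteration scheme---the paper uses an arithmetic sequence of radii $r+kr_0$ to force $F$ to grow at least exponentially, while you double along $2^kR$ and use the resulting $2^{-k(k-1)}$ factor; both routes yield $u\equiv 0$.
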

\begin{theorem}\label{main thm 2}
  If $L$ satisfies conditions (\ref{ellipticity}) and (\ref{smallness}) and $d\geq 1$, then $\dim E_d \leq C d^{n+2}$ for some constant $C = C(n, \lambda, \Lambda, V_0)$.
\end{theorem}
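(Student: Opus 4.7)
The plan is to adapt the reproducing-kernel / orthonormal-basis argument of Colding and Minicozzi, with the exponential growth bound on $E_d$ playing the role of the polynomial growth assumption and Lemma \ref{Lem: rev Poincare} (combined with the parabolic mean value inequality) providing pointwise $L^\infty$ control of solutions by their $L^2$-norm on slightly larger spacetime cylinders.

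Fix a $k$-dimensional subspace $K \subset E_d$. For $R > 0$ set
$$Q_R := (-R^2, 0) \times \bigl( \Omega \cap \{|x_0| < R\} \bigr)$$
and define the bilinear form $I_R(u, v) := \int_{Q_R} u v \, dx \, dt$ on $K$. Unique continuation for solutions of (\ref{eqn1}), together with the Dirichlet condition on $\R^-\times\partial\Omega$, forces $I_R$ to be positive definite on $K$ for all sufficiently large $R$; fix such an $R$ and let $\{u_1, \ldots, u_k\}$ be an $I_R$-orthonormal basis of $K$.

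The argument has two analytic inputs. Integrating the growth bound $|u(t,x)| \leq C_u e^{d(|x|+|t|^{1/2})}$ over $Q_R$, with $\vol(Q_R) \leq C V_0 R^3$, gives
$$I_R(u, u) \leq C_u^2 V_0 R^3 e^{4 d R}.$$
A Moser-type iteration of Lemma \ref{Lem: rev Poincare} produces the reverse bound
$$\sup_{Q_r} |u|^2 \leq \frac{C}{\vol(Q_R)} I_R(u, u), \qquad r \leq R/2,$$
for any solution $u$ of (\ref{eqn1}). Summing this over the orthonormal basis and using the reproducing-kernel identity $\sum_i u_i(t,x)^2 = K_R(t,x;t,x)$ yields
$$\sum_{i=1}^{k} u_i(t,x)^2 \leq \frac{C k}{\vol(Q_R)} \quad \text{on } Q_r.$$
Comparing this pointwise bound with the growth estimate at a slightly larger scale $\lambda R$ and choosing $\lambda R \sim 1/d$ so that the exponential factor $e^{2d(\lambda-1)R}$ is of order one converts the exponential loss into a polynomial power of $d$; combined with a volume count on the shrinking cylinder and an eigenvalue count for the Dirichlet spectrum of $\Omega_0$, this yields $k \leq C d^{n+2}$, the exponent $n+2$ reflecting the parabolic dimension of the strip.

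The main technical obstacle is obtaining the pointwise $L^\infty$ bound above with constants uniform in $R$, which is precisely what forces the smallness hypothesis (\ref{smallness}) on the first and zeroth order coefficients of $L$: without it the Caccioppoli iteration would pick up an $R$-dependent factor and destroy the polynomial dimension bound. A related difficulty is that the cylinders $Q_r, Q_R$ touch the Dirichlet boundary $\R^- \times \partial \Omega$, so Lemma \ref{Lem: rev Poincare} must be applied up to this boundary; the vanishing of $u$ there is what makes the boundary Caccioppoli estimate available, and is also the underlying reason that $\dim E_d < \infty$ is special to the strip rather than to an interior domain in Euclidean space, where infinite dimensional families of exponentially growing ancient solutions exist.
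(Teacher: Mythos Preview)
Your sketch is missing the central idea and, as written, does not close.

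The displayed estimate
\[
\sum_{i=1}^{k} u_i(t,x)^2 \;\leq\; \frac{Ck}{\vol(Q_R)}
\]
carries a factor of $k$ on the right, so integrating it back over $Q_R$ only yields $k\le Ck$. The whole point of the reproducing-kernel trick is that $K(t,x)=\sum_i u_i(t,x)^2 = w(t,x)^2$ for a single $J_{a+\delta}$-unit vector $w$ depending on $(t,x)$, so the mean value inequality gives $K(t,x)\le C(a+\delta-r)^{-(n+3)}$ \emph{without} any $k$. Integrating this over the annulus $Q_a\setminus Q_{a/2}$ and then invoking Lemma~\ref{Lem: rev Poincare} (not a Moser iteration) to pass from the annulus to the full $Q_a$ yields $\sum_i I_{v_i}(a)\le C\delta^{-(n+2)}$ for any $J_{a+\delta}$-orthonormal family. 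Your claimed bound $\sup_{Q_{R/2}}|u|^2 \le \frac{C}{\vol(Q_R)}I_R(u,u)$ is also false for large $R$: since the cross-section is bounded, the correct local scale in the mean value inequality is the distance to the boundary of $Q_{a+\delta}$, not $R$.

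Even with the correct upper bound $\sum_i I_{v_i}(a)\le C\delta^{-(n+2)}$, you still need a \emph{lower} bound $\sum_i I_{v_i}(a)\ge c\,k$ to conclude. Nothing in your outline provides this. Your phrase ``comparing this pointwise bound with the growth estimate at a slightly larger scale $\lambda R$ and choosing $\lambda R\sim 1/d$'' is not meaningful: $R$ has already been fixed large, and the global growth bound $|u|\le C_u e^{d(|x|+|t|^{1/2})}$ involves a constant $C_u$ depending on $u$, so it gives no uniform information about an orthonormal basis. The paper supplies the missing lower bound by a pigeonhole argument on scales (Lemma~\ref{lem: exp growth} and Proposition~\ref{prop: construction}): from the exponential rate $\le (4d+1)$ of the functions $f_i$, one finds a scale $m\delta$ (with $\delta=1/d$) at which at least half of $2k$ suitably constructed solutions satisfy $f_i((m+1)\delta)\le 2\sigma f_i(m\delta)$ for a bounded $\sigma$; simultaneous diagonalisation of $J_{m\delta}$ and $J_{(m+1)\delta}$ then produces $\ell\ge\sigma^{-1}k$ vectors that are $J_{(m+1)\delta}$-orthonormal and have $I_{v_i}(m\delta)\ge\sigma^{-1}$. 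This is the step that converts exponential growth into a polynomial power of $d$, and it is absent from your proposal. The Dirichlet spectrum of $\Omega_0$ plays no role.
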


The study of solutions with certain growth conditions originates from the classical Liouville theorem, which states that positive or bounded harmonic functions on $\R^n$ are constants. The Liouville theorem was generalized to manifolds with nonnegative Ricci curvature in \cites{Yau75, ChY75} through gradient estimate. Yau then conjectured that on a manifold with nonnegative Ricci curvature, the space of harmonic functions with polynomial growth is finite dimensional. The conjecture was resolved in \cite{CM97}. In fact, Colding and Minicozzi proved the finite dimensionality for a larger family of metric measure spaces where volume doubling and Neumann-Poincare inequality hold. \cite{CM98} showed that dimension is bounded by $Cd^{n-1}$ and the exponent $n-1$ is optimal in the case of $\R^n$. When the domain is an infinite strip instead of the Euclidean space, Hang and Lin, \cite{HL99}, showed that the harmonic function with polynomial growth has to be zero. Using Colding-Minicozzi's construction \cite{CM97, CM98}, they further showed that there are only finitely many linearly independent harmonic functions with exponential polynomial growth. Recently, \cite{H20} refined their bound.

On the parabolic side of the story, the key ingredient is Li-Yau's parabolic gradient estimate \cite{LY86}, replacing Cheng-Yau's estimate for the harmonic functions. Souplet and Zhang proved an analog of Liouville theorem for the heat equation on manifold in \cite{SZ06}, stating that any sublinear eternal caloric function on a noncompact manifold with nonnegative Ricci curvature has to be a constant. However, when counting the dimension of the solution space with growth condition, the ancient caloric functions behave more like harmonic functions compared to the eternal ones. In recent years, \cite{LZ19} bounded the dimension of ancient caloric functions with polynomial growth of degree less than or equal to $d$ on a $n$-dimensional manifold with nonnegative Ricci curvature by $Cd^{n+1}$. Colding and Minicozzi proved a sharper bound $Cd^n$ in \cite{CM19} which has optimal dependence of $d$ in the case of $\R^n$. In \cite{H19}, Hua generalized their result to graphs. Using the construction in \cite{CM97, CM98} and a new localization inequality, Colding and Minicozzi bounded the dimension of ancient caloric functions on mean curvature flow by its entropy in \cite{CM20}. This gives a bound to the codimension of mean curvature flow.

Theorem \ref{main thm 1} and \ref{main thm 2} are the parabolic analogs of Han and Lin's results in \cite{HL99}. One could further study the solution space of noncompact manifolds with multiple ends and certain volume growth, gaining more understanding of how the geometry of the manifold changes the growth rate of harmonic and caloric functions on it. However, we should note that polynomial growth harmonic functions do exist on some noncompact manifolds with two ends, such as catenoids. This hints that some geometric conditions are necessary to control the growth of the harmonic and ancient caloric functions.

We should remark that the parabolic scaling ratio between $|x|$ and $|t|$ in the growth condition $e^{d(|x|+|t|^\frac{1}{2})}$ in Definition \ref{def: solution space} is crucial and cannot be relaxed to $e^{d(|x|+|t|)}$. Otherwise, one has no hope to prove any finite dimensionality of the solution space since separation of variables method already gives infinitely many linearly independent ancient heat solutions on $[0,1]\times \R$.

The paper will be structured as follows. In section 2, we show that ancient solutions to (\ref{eqn1}) satisfy a reverse Poincar\'e type inequality and thus cannot grow slower than any polynomials. In section 3, we recall some basic properties of exponential growth monotone functions and construct ``good'' solutions from the given ones. In section 4, we prove the main theorem \ref{main thm 2}.

{\bf Acknowledgements.} I would like to thank my advisor Professor William Minicozzi for his patient guidance and constant support.

\section{Growth of ancient solutions}
Let $D_r = (-r, r)\times \Omega_0 \subset \Omega$ and $Q_r = (-r^2, 0] \times D_r$ be our version of parabolic ``ball'' in the infinite strip. The key estimate that controls the growth of the solution is the following reverse Poincar\'e type inequality.

\begin{lemma}\label{Lem: rev Poincare}
  There exists constant $\varepsilon_0 = \varepsilon_0(n, \lambda, V_0) > 0$ such that if condition (\ref{smallness}) holds for $\varepsilon = \varepsilon_0$, then for any solution $u\in W^{1,2}_{loc}(\R^-\times \Omega)$ of equation (\ref{eqn1}) and $0< r < R$, there is a constant $C = C(\lambda, \Lambda)$ such that
  \begin{equation}
    \int_{Q_r} |\nabla u|^2 \leq \frac{C}{(R-r)^2} \int_{Q_R\setminus Q_r} u^2.
  \end{equation}
  Furthermore, there exists constant $C = C(n, \lambda, \Lambda, V_0)$ such that
  \begin{equation}
    \int_{Q_r} u^2 \leq \frac{C}{(R-r)^2} \int_{Q_R\setminus Q_r} u^2.
  \end{equation}
\end{lemma}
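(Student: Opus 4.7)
The plan is to run the standard Caccioppoli energy argument with test function $u\eta^2$, exploiting the Dirichlet condition $u|_{\R^- \times \partial\Omega} = 0$ through a cross-sectional Poincar\'e inequality on $\Omega_0$. I would use a product cutoff $\eta(t,x_0) = \phi(x_0)\psi(t)$ with $\phi \equiv 1$ on $[-r,r]$, supported in $[-R,R]$, and $\psi \equiv 1$ on $[-r^2, 0]$, supported in $(-R^2, 0]$, with $|\phi'| \le C/(R-r)$ and $|\psi'| \le C/(R-r)^2$. Multiplying $(\partial_t - L)u = 0$ by $u\eta^2$, integrating over $\R^- \times \Omega$, and integrating by parts (the lateral boundary terms on $\R^- \times \partial\Omega$ vanish since $u = 0$ there, and the past-time boundary is killed by the support of $\eta$) yields
\begin{equation*}
\int a^{ij}\partial_i u \partial_j u\, \eta^2 + \tfrac{1}{2}\int_{\{t=0\}} u^2\eta^2 = \tfrac{1}{2}\int u^2 \partial_t\eta^2 - 2\int a^{ij}\partial_j u \cdot u\eta\, \partial_i\eta + \int b^i \partial_i u \cdot u \eta^2 + \int c\, u^2 \eta^2.
\end{equation*}

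The key observation is that $\partial_t\eta$ and $\nabla \eta$ are supported in $Q_R \setminus Q_r$, so after dropping the nonnegative $\{t=0\}$ term, applying ellipticity $a^{ij}\partial_i u \partial_j u \ge \lambda |\nabla u|^2$, and using Cauchy-Schwarz with small parameter on the $\nabla\eta$-cross term, I obtain
\begin{equation*}
\tfrac{\lambda}{2}\int |\nabla u|^2 \eta^2 \le \frac{C(\lambda,\Lambda)}{(R-r)^2}\int_{Q_R \setminus Q_r} u^2 + C(\lambda)\,\varepsilon \int_{Q_R} u^2 \eta^2,
\end{equation*}
where the localization of the first term to the annulus is exactly the support property of the cutoff derivatives, while the $b^i$ and $c$ contributions remain supported on all of $Q_R$. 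To handle the latter I would invoke the Dirichlet Poincar\'e inequality on the cross-section, $\int_{\Omega_0} u^2 \le C_P(n, V_0) \int_{\Omega_0} |\nabla_{x'} u|^2$, with $C_P$ depending only on $n$ and $V_0$ via Faber-Krahn. Integrating against $\eta^2$ gives $\int u^2 \eta^2 \le C_P \int |\nabla u|^2 \eta^2$, so choosing $\varepsilon_0 = \varepsilon_0(n, \lambda, V_0)$ small enough that $C(\lambda) C_P \varepsilon_0 \le \lambda/4$ allows the $\varepsilon$-term to be absorbed into the LHS, yielding the first stated inequality.

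The second inequality follows immediately from the same cross-sectional Poincar\'e inequality applied on $Q_r$: $\int_{Q_r} u^2 \le C_P \int_{Q_r} |\nabla_{x'} u|^2 \le C_P \int_{Q_r} |\nabla u|^2$, which combined with the gradient bound gives the $L^2$ reverse Poincar\'e with constant $C(n, \lambda, \Lambda, V_0)$. I expect the main point requiring care is the simultaneous bookkeeping of which error terms are automatically confined to $Q_R \setminus Q_r$ (those arising from $\partial_t \eta$ and $\nabla \eta$) versus those supported on all of $Q_R$ (the first- and zeroth-order coefficient terms); the smallness assumption on $\varepsilon$ is precisely what allows the latter to be handled via the transverse Poincar\'e inequality and absorbed, so the annular localization of the cutoff-derivative terms survives in the final estimate.
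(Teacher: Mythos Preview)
Your proposal is correct and follows essentially the same argument as the paper: a Caccioppoli energy estimate with a cutoff $\eta=\eta(t,x_0)$ that is constant on cross-sections, combined with the slice-wise Dirichlet Poincar\'e inequality on $\Omega_0$ to absorb the lower-order terms under the smallness hypothesis on $\varepsilon$. Your explicit invocation of Faber--Krahn to justify the dependence $C_P=C_P(n,V_0)$ and your careful bookkeeping of which terms localize to $Q_R\setminus Q_r$ are, if anything, slightly more detailed than the paper's presentation.
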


\begin{proof}
  Let $\varphi = \varphi(t,x_0) \in C^\infty_c(\R^- \times \R)$ be a compactly supported cutoff function in both time direction and the direction where the strip goes to infinity. With a slight abuse of notation, we will write $\varphi(t, x) = \varphi(t, x_0)$ for $x\in \R^{n+1}$. Using integration by parts, we have
  \begin{equation}\label{eqn: 0}
    \begin{aligned}
      \frac{1}{2}\partial_t \int_{\Omega} u^2\varphi^2
      &= \int_{\Omega} \varphi^2 uLu + \int_{\Omega} u^2 \varphi \partial_t \varphi \\
      &= \int_{\Omega} \varphi^2u(\partial_i(a^{ij} \partial_j u) + b^i\partial_i u + cu) +\int_{\Omega} u^2 \varphi \partial_t \varphi\\
      &=  -\int_\Omega (\varphi^2a^{ij}\partial_i u \partial_j u + 2\varphi u a^{ij}\partial_i \varphi \partial_j u) \\
      &\quad +\int_\Omega \varphi^2u(b^i\partial_i u + cu) + \int_{\Omega} u^2 \varphi \partial_t \varphi.
    \end{aligned}
  \end{equation}
  From condition (\ref{ellipticity}), we bound
  \begin{equation}\label{eqn: 1}
    \begin{aligned}
      \int_\Omega \varphi^2 a^{ij}\partial_i u \partial_j u + \int_\Omega 2\varphi u a^{ij}\partial_i \varphi \partial_j u
      &\geq \lambda \int_\Omega \varphi^2 |\nabla u|^2 - 2\Lambda\int_\Omega |\varphi||u||\partial_0 \varphi||\nabla u| \\
      & \geq \frac{3\lambda}{4} \int_\Omega \varphi^2 |\nabla u|^2 - C\int_\Omega u^2 |\partial_0 \varphi|^2
    \end{aligned}
  \end{equation}
  where $C  = 4\lambda^{-1} \Lambda^2 = C(\lambda, \Lambda)$ using inequality $\eta a^2+ \eta^{-1}b^2 \geq 2ab$ and $\partial_0 \varphi = \frac{\partial \varphi}{\partial x_0}$.
  Assume that condition (\ref{smallness}) holds for some $\varepsilon$, then we can bound the lower order terms
  \begin{equation}\label{eqn: 2}
    \begin{aligned}
      \int_\Omega \varphi^2u(b^i\partial_i u+ cu)
      &\leq \int_\Omega \sqrt{\varepsilon} \varphi^2 |u||\nabla u| + \varepsilon\int_\Omega\varphi^2 u^2 \\
      &\leq \frac{\lambda}{4}\int_\Omega \varphi^2|\nabla u|^2 + C\varepsilon \int_\Omega \varphi^2 u^2
    \end{aligned}
  \end{equation}
  where $C = C(\lambda)$. Since $\varphi$ is constant on each slice of the $\Omega$, applying Poincar\'e inequality, we have that
  \begin{equation}\label{eqn: 3}
      \int_\Omega \varphi^2 u^2 = \int_\R \varphi^2 \int_{\{x_0\}\times \Omega_0} u^2 \leq C\int_\R \varphi^2 \int_{\{x_0\}\times \Omega_0} |\nabla^{\Omega_0} u|^2 \leq C \int_\Omega \varphi^2 |\nabla u|^2
  \end{equation}
  where $C= C(n, V_0)$. Here, $\nabla^{\Omega_0} u$ is the projection of $\nabla u$ to its last $n$ coordinates and thus has smaller norm. From equations (\ref{eqn: 2}) and (\ref{eqn: 3}), we can take $\varepsilon_0 = \varepsilon_0(n, \lambda, V_0)$ small so that
  \begin{equation}\label{eqn: 4}
    \int_\Omega \varphi^2u(b^i\partial_i u+ cu) \leq \frac{\lambda}{2} \int_\Omega \varphi^2 |\nabla u|^2.
  \end{equation}

  Combining (\ref{eqn: 0}), (\ref{eqn: 1}) and (\ref{eqn: 4}),
  \begin{equation}\label{eqn: 5}
      \partial_t \int_{\Omega} u^2\varphi^2 + \frac{\lambda}{2} \int_\Omega \varphi^2 |\nabla u|^2 \leq C\int_\Omega u^2|\partial_0 \varphi|^2 + 2\int_\Omega u^2 |\varphi| |\partial_t \varphi|.
  \end{equation}
  Integrate over $t\in \R^-$, we have
  \begin{align}\label{rev poincare}
      \int_{\{t = 0\} \times \Omega} u^2\varphi^2 + \frac{\lambda}{2}\int_{\R^- \times \Omega} \varphi^2 |\nabla u|^2
      \leq C\int_{R^-\times \Omega } u^2 |\partial_0 \varphi|^2 + 2 \int_{ \R^-\times \Omega} u^2 |\varphi||\partial_t \varphi|.
  \end{align}

  Now we specify the cutoff function $\varphi$. Let $\varphi = 1$ on $Q_r$ and $\varphi=0$ outside $Q_R$ and $\varphi \in [0,1]$. We can also let
  \begin{equation}\label{eqn: cutoff}
    |\partial_0 \varphi| \leq \frac{2}{R-r} \text{  and  } |\partial_t \varphi| \leq \frac{2}{R^2 - r^2} \leq \frac{2}{(R-r)^2}.
  \end{equation}
  Note that the first term on the left hand side in (\ref{rev poincare}) is positive. Combining (\ref{rev poincare}) and (\ref{eqn: cutoff}), we have
  \begin{equation}
    \int_{Q_r} |\nabla u|^2 \leq \frac{C}{(R-r)^2} \int_{Q_R\setminus Q_r} u^2
  \end{equation}
  where $C = C(\lambda, \Lambda)$ as desired. The second claim follows by applying Poincar\'e inequality to each slice of the strip.
\end{proof}

The lemma effectively lower bounds the growth of the ancient solution to equation (\ref{eqn1}). In fact, we show that any ancient solution with polynomial growth has to be zero.

\begin{proof}[Proof of Theorem \ref{main thm 1}]
Fix $r > 1$. For any $R > r$, we have
\begin{equation}\label{eqn: growth est}
  \int_{Q_R} u^2 \geq \left(1+\frac{(R-r)^2}{C}\right)\int_{Q_r} u^2
\end{equation}
for $C= C(n, \lambda, \Lambda, V_0)$ in Lemma \ref{Lem: rev Poincare}. Choose $r_0 = \sqrt{C(e-1)} > 0$. Iterations of inequality (\ref{eqn: growth est}) imply that for any integer $k>0$,
\begin{equation}
  \int_{Q_{r+kr_0}} u^2 \geq e^k \int_{Q_r} u^2.
\end{equation}
Taking $k$ to infinity, we have
\begin{equation}
  \liminf_{R\to\infty} R^{-1}\log \left(\int_{Q_R}u^2\right) \geq \frac{1}{r_0}
\end{equation}

If $u \in P_d$ is an ancient solution with polynomial growth, then $\int_{Q_R} u^2 \leq C R^{2d}$ for any $R > 1$. In particular,
\begin{equation}
  \int_{Q_r} u^2 \leq \frac{C(r+kr_0)^{2d}}{e^k}
\end{equation}
which leads to a contradiction for large $k$ unless $u\equiv 0$.
\end{proof}

\section{Construction of good solutions from given ones}
Theorem \ref{main thm 1} motivates us to study the ancient solutions with exponential growth instead. The proof of Theorem \ref{main thm 2} follows the idea in \cite{CM97} and \cite{CM98}, where they bound the dimension using a set of solutions with good properties. The exponential growth bound makes it possible to construct solutions with good properties from any given ones.

We start with some definitions (cf. \cite{CM97} section 2 and section 4). For each $r>0$, we define a positive semi-definite bilinear form on $L^2_{loc}(R^- \times \Omega)$
\begin{equation}
  J_r(u,v) = \int_{Q_r} uv
\end{equation}
for any locally square integrable functions $u,v$. We should note that $J_r$ is an inner product on $L^2(Q_r)$. When the unique continuation property holds for $u\in E_d$, for example when $L$ has Lipschitz coefficients, then $J_r$ defines an inner product on $E_d$. We also write
\begin{equation}
  I_u(r) = J_r(u,u) = \int_{Q_r} u^2
\end{equation}
for locally integrable function $u$. By definition, $I_u(r)$ is non-decreasing in $r$.

We say that a set of linearly independent functions $v_1, ...,v_\ell \in L^2_{loc}(\R^- \times \Omega)$ has good properties if, for some $r, \delta$ and $\sigma$, they are orthonormal with respect to $J_r$ and $I_{v_i}(r+\delta) \leq \sigma$. We will see in Proposition \ref{prop: construction} that it is possible to construct functions with good properties from any given set of linearly independent functions in $E_d$.

\begin{definition}\label{def: f_i}
  Let $u_1,...,u_k$ be linearly independent functions in $L^2_{loc}(R^- \times \Omega)$. For $r > 0$, we define
\begin{equation}
    \tilde{w}_{1,r} = u_1|_{Q_r}.
\end{equation}
   We define $\tilde{w}_{i, r}\in L^2(Q_r)$ inductively for $i =2,...,k$ as follows. Since $J_r$ defines an inner product on $L^2(Q_r)$, we can write $P_i$ as the orthogonal projection map onto the space $\lsp\{\tilde{w}_{1,r}, ..., \tilde{w}_{i-1, r} \}^\perp \subset L^2(Q_r)$ under the inner product $J_r$. Then,
  \begin{equation}
    \tilde{w}_{i,r} = P_i (u_i|_{Q_r})
  \end{equation}
  is well-defined and we can write
  \begin{equation}
    u_i|_{Q_r} = \sum_{j=1}^{i-1} \lambda_{ij}(r) u_j|_{Q_r} + \tilde{w}_{i, r}
  \end{equation}
  for some real coefficients $\lambda_{ij}(r)$. Moreover, we extend $w_{i,r}$ to a locally square integrable function on $R^- \times \Omega$ by
  \begin{equation}
    w_{i,r} = u_i - \sum_{j=1}^{i-1} \lambda_{ij}(r) u_j.
  \end{equation}

  We should remark that the restrictions $u_i|_{Q_r}$ are not necessarily linearly independent and thus $\lambda_{ij}(r)$ and $w_{i,r}$ are not uniquely defined. However, since $u_i$ are linearly independent, there exists $R>0$ such that $u_i|_{Q_r}$ are linearly independent for all $r> R$.

  We also define
  \begin{equation}\label{eqn: f_i}
    f_i(r) = \int_{Q_r} w_{i,r}^2 = \int_{Q_r} \tilde{w}_{i,r}^2.
  \end{equation}
\end{definition}

Here are a few immediate properties of $f_i(r)$.
\begin{lemma}\label{lem: properties of f}
  Given a set of linearly independent exponential growth ancient solutions $u_1, ..., u_k \in E_d$, define $f_i$ as in (\ref{eqn: f_i}). The following holds for $i = 1,...,k$:
  \begin{enumerate}
    \item $f_i(r) \leq C e^{(4d+1)r}$ where $C= C(u_i, V_0)$,
    \item $f_i(r) = I_{w_{i,r}}(r)$ and $f_i(s) \leq I_{w_{i,r}}(s)$ for any $s>0$, and
    \item $f_i$ is non-decreasing and nonnegative and is not identically zero.
  \end{enumerate}
\end{lemma}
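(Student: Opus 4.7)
The plan is to unify all three items through a single variational characterization of $f_i(r)$. The Gram--Schmidt construction in Definition \ref{def: f_i} gives $\mathrm{span}\{\tilde w_{1,r},\dots,\tilde w_{i-1,r}\}=\mathrm{span}\{u_1|_{Q_r},\dots,u_{i-1}|_{Q_r}\}$ inside $L^2(Q_r)$, so $\tilde w_{i,r}$ is the $J_r$-orthogonal projection of $u_i|_{Q_r}$ onto the orthogonal complement of that common span. Equivalently,
\begin{equation*}
  f_i(r) \;=\; \min_{(c_1,\dots,c_{i-1})\in\R^{i-1}}\int_{Q_r}\Bigl(u_i - \sum_{j<i} c_j u_j\Bigr)^2.
\end{equation*}
Every item below is a short corollary of this identity.

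For (1), the choice $c_j=0$ in the variational formula yields $f_i(r)\leq\int_{Q_r}u_i^2$. On $Q_r=(-r^2,0]\times(-r,r)\times\Omega_0$ we have $|t|^{1/2}\leq r$ and $|x|\leq r+C(\Omega_0)$, so the $E_d$-bound on $u_i$ gives $u_i^2\leq C(u_i)\,e^{4dr}$ on $Q_r$. Multiplying by $|Q_r|=2V_0 r^3$ and using $r^3\leq C'e^r$ for $r\geq 0$ yields $f_i(r)\leq C(u_i,V_0)\,e^{(4d+1)r}$.

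For (2), the identity $f_i(r)=I_{w_{i,r}}(r)$ is immediate from $w_{i,r}|_{Q_r}=\tilde w_{i,r}$. The inequality $f_i(s)\leq I_{w_{i,r}}(s)$ follows because $w_{i,r}|_{Q_s}=u_i|_{Q_s}-\sum_{j<i}\lambda_{ij}(r)u_j|_{Q_s}$ is a valid competitor at scale $s$, so the variational formula gives $f_i(s)\leq\int_{Q_s}w_{i,r}^2=I_{w_{i,r}}(s)$. For (3), nonnegativity is built in, and monotonicity follows because if $c^*$ minimizes $f_i(t)$, then for $s<t$,
\begin{equation*}
  f_i(s)\leq\int_{Q_s}\Bigl(u_i-\sum_{j<i} c^*_j u_j\Bigr)^2\leq\int_{Q_t}\Bigl(u_i-\sum_{j<i} c^*_j u_j\Bigr)^2=f_i(t).
\end{equation*}
Non-triviality uses the remark in Definition \ref{def: f_i}: for $r>R$ large, $u_1|_{Q_r},\dots,u_k|_{Q_r}$ are linearly independent in $L^2(Q_r)$, so $u_i|_{Q_r}\notin\mathrm{span}\{u_1|_{Q_r},\dots,u_{i-1}|_{Q_r}\}$, forcing $\tilde w_{i,r}\neq 0$ and $f_i(r)>0$.

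The only real obstacle is the elementary bookkeeping in (1): absorbing the polynomial volume factor $r^3$ into the exponential to reach the advertised exponent $4d+1$, and packaging the $\Omega_0$-dependent constants into $C(u_i,V_0)$ as allowed. Everything else is a one-line consequence of the projection identity combined with the nesting $Q_s\subset Q_t$.
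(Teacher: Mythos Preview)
Your proof is correct and follows essentially the same approach as the paper: both hinge on the variational characterization $f_i(r)=\min_{c\in\R^{i-1}}\int_{Q_r}\bigl(u_i-\sum_{j<i}c_ju_j\bigr)^2$ coming from the orthogonal projection, and derive all three items from it in the same way. Your write-up is slightly more explicit about the bookkeeping in (1) and the monotonicity argument in (3), but the logical content is identical.
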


\begin{proof}
  Since $w_{i,r}$ is the $J_r$-orthogonal projection of $u_i$,
  \begin{equation}
    \begin{aligned}
      f_i(r) \leq \int_{Q_r} u_i^2 \leq C \vol(Q_r)\, e^{4dr} \leq C'e^{(4d+1)r}
    \end{aligned}
  \end{equation}
  and (1) is thus established. For the same reason,
  \begin{equation}
    f_i(r) = \int_{Q_r} w_{i,r}^2 = \min \int_{Q_r} \big|u_i - \sum_{j=1}^{i-1} \lambda_j u_j\big|^2
  \end{equation}
  where the minimum is taken for all real coefficients $\lambda_j$. In particular, if we let $\lambda_j = \lambda_{ij}(s)$, then this implies $f_i(r) \leq I_{w_{i,s}}(r)$ for any $s>0$, which is the claim (2).

  If $s<r$, then $f_i(s) \leq I_{w_{i,r}}(s) \leq I_{w_{i,r}}(r) = f_i(r)$ by (2) and thus $f_i$ is monotone. Since $u_i|_{Q_r}$ are linearly independent for some large $r$, $w_{i,r}$ is not identically zero on $Q_r$. Claim (3) then follows.
\end{proof}

To construct solutions with good properties, we need the following lemma for functions with exponential growth bound.
\begin{lemma}\label{lem: exp growth}
  Let $f_1,..., f_k:[0,\infty)\to [0,\infty)$ be $k$ non-decreasing functions that are not identically zero. Assume that for some $d, C > 0$ and all $i = 1,...,k$,
  \begin{equation}\label{eqn: f_i exp bound}
    f_i(r) \leq Ce^{dr}.
  \end{equation}
  For any $\delta > 0$, $\ell < k$ and $\sigma > e^{\frac{k}{k-\ell +1}\delta d}$, there exist $\ell$ of these functions $f_{\alpha_1},..., f_{\alpha_\ell}$ and infinitely many integers $m$ such that, for $i = 1,...,\ell$,
  \begin{equation}\label{eqn: growth}
    f_{\alpha_i}((m+1)\delta) \leq \sigma f_{\alpha_i}(m\delta).
  \end{equation}
\end{lemma}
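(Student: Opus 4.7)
The plan is a double pigeonhole argument: first a telescoping count of ``bad'' ratios controls how often each $f_i$ can jump by more than a factor of $\sigma$, then a pigeonhole on subsets of indices picks out a common good $\ell$-tuple. For each index $i\in\{1,\dots,k\}$ and each nonnegative integer $m$, I call $m$ \emph{bad for $f_i$} if $f_i((m+1)\delta) > \sigma f_i(m\delta)$, and write $B_i$ for the set of such $m$. Since each $f_i$ is non-decreasing and not identically zero, I can pick $m_0$ so that $f_i(m_0\delta)>0$ for every $i$. Telescoping the ratios from $m_0$ to $M-1$ and using that the non-bad factors are $\geq 1$ by monotonicity while the bad ones exceed $\sigma$, I obtain
\[
\sigma^{|B_i\cap[m_0,M)|} \;<\; \frac{f_i(M\delta)}{f_i(m_0\delta)} \;\leq\; \frac{C\,e^{dM\delta}}{f_i(m_0\delta)}.
\]
Taking logarithms and noting $\log\sigma > 0$, this gives $|B_i\cap[m_0,M)| \leq \frac{d\delta}{\log\sigma}M + C_i$ for a constant $C_i$ depending on $i$, and summing over $i$ yields
\[
\sum_{i=1}^{k} |B_i\cap[m_0,M)| \;\leq\; \frac{k\,d\delta}{\log\sigma}\,M + O(1).
\]

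For the heart of the argument, I would suppose for contradiction that only finitely many integers $m$ have at most $k-\ell$ bad indices. Then there exists $m_1\geq m_0$ such that every $m\geq m_1$ is bad for at least $k-\ell+1$ indices $i$, hence
\[
(k-\ell+1)(M-m_1) \;\leq\; \sum_{i=1}^k |B_i\cap[m_1,M)| \;\leq\; \sum_{i=1}^{k}|B_i\cap[m_0,M)|.
\]
Combining with the upper bound, dividing by $M$, and sending $M\to\infty$ gives $k-\ell+1 \leq \frac{kd\delta}{\log\sigma}$, equivalently $\sigma \leq e^{\frac{k}{k-\ell+1}\delta d}$, contradicting the hypothesis on $\sigma$. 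Therefore infinitely many integers $m$ have at least $\ell$ indices $i$ that are \emph{not} bad.

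To finish, since there are only $\binom{k}{\ell}$ subsets of size $\ell$ in $\{1,\dots,k\}$, a second pigeonhole produces one fixed $\ell$-subset $\{\alpha_1,\dots,\alpha_\ell\}$ contained in the ``good'' set of infinitely many $m$; for each such $m$ and each $i$, $f_{\alpha_i}((m+1)\delta)\leq \sigma f_{\alpha_i}(m\delta)$, which is exactly the required inequality. The only real subtlety is bookkeeping: one must choose $m_0$ large enough that every $f_i(m_0\delta)$ is positive so the telescoping makes sense, and then track constants carefully so the contradiction yields exactly the exponent $\frac{k}{k-\ell+1}$ stated in the hypothesis on $\sigma$.
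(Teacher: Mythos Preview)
Your proof is correct and follows essentially the same approach as the paper: both argue by contradiction that if only finitely many $m$ admit an $\ell$-subset with controlled growth, then from some point on at least $k-\ell+1$ indices are bad at every step, forcing growth incompatible with the bound $Ce^{dr}$, and both finish with the same pigeonhole over the finitely many $\ell$-subsets. The only cosmetic difference is that the paper packages the count multiplicatively via the product $f=\prod_i f_i$ and iterates $f((m+1)\delta)>\sigma^{k-\ell+1}f(m\delta)$, whereas you take logarithms and count bad steps additively---these are equivalent formulations of the same estimate.
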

\begin{proof}
  We will show that for infinitely many integers $m$, (\ref{eqn: growth}) holds for some rank $\ell$ subset of $\{f_1,...,f_k\}$. Since the number of such subsets of $\{f_1,...,f_k\}$ is finite, there will be infinitely many integers $m$ such that (\ref{eqn: growth}) holds for some fixed rank $\ell$ subset.

  Let
  \begin{equation}
  f(r) = \prod_{i=1}^k f_i(r).
  \end{equation}
  By definition, $f$ is a nonnegative non-decreasing function on $[0,\infty)$ that is not identically zero. There exists $R>0$ such that $f(r)$ is strictly postive when $r> R$. From the bound (\ref{eqn: f_i exp bound}), we have
  \begin{equation}
    f(r) \leq C^ke^{kdr}.
  \end{equation}

  Now, for a contradiction, suppose there are finitely many integers $m$ such that (\ref{eqn: growth}) holds for some rank $\ell$ subset of $\{f_1,...,f_k\}$ and let $m_0$ be the largest one. For any integer $m > \max\{m_0, R\delta^{-1}\}$, we have $f(m\delta)>0$ and
  \begin{equation}
    f_i((m+1)\delta) > \sigma f_i (m\delta)
  \end{equation}
  for at least $k-\ell+1$ integers $i\in\{1,...,k\}$. Therefore
  \begin{equation}\label{eqn: basic step}
    f((m+1)\delta) > \sigma^{k-\ell+1}f(m\delta).
  \end{equation}
  Now fix an integer $m > \max\{m_0, R\delta^{-1}\}$. By iterations of (\ref{eqn: basic step}),
   \begin{equation}
     0< f(m\delta) < \frac{f((m+j)\delta )}{\sigma^{j(k-\ell+1)}} < \frac{C^ke^{k(m+j)\delta d}}{\sigma^{j(k-\ell+1)}}
   \end{equation}
   for any integer $j>0$. This leads to a contradiction when $j$ is large since $\sigma > e^{\frac{k}{k-\ell+1}\delta d}$.
\end{proof}

The following proposition constructs functions with good properties given any linearly independent set of ancient solutions with polynomial growth.

\begin{proposition}\label{prop: construction}
  Suppose $u_1, ...u_{2k} \in E_d$ are linearly independent. Given $\delta>0$, $m_0 > 0$ and write $\sigma = \frac{1}{2}e^{(8d+2)\delta}$, there exists integers $m>m_0$, $\ell \geq \sigma^{-1} k$ and functions $v_1,...,v_\ell$ in the linear span of $u_i$ such that for $i,j = 1,...,\ell$
  \begin{equation}
      J_{(m+1)\delta}(v_i, v_j) = \delta_{ij}, \text{ and }
  \end{equation}
  \begin{equation}
    \sigma^{-1} \leq I_{v_i}(m\delta).
  \end{equation}
\end{proposition}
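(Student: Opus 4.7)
The plan is to apply Lemma \ref{lem: exp growth} to the Gram--Schmidt residuals $f_i$ from Definition \ref{def: f_i} and then normalize the associated $w_{i,r}$'s to obtain the $v_i$. Concretely, I would form $f_1,\dots,f_{2k}$ associated with $u_1,\dots,u_{2k}$. By Lemma \ref{lem: properties of f}, each $f_i$ is non-decreasing, nonnegative, not identically zero, and satisfies $f_i(r)\le Ce^{(4d+1)r}$. These are precisely the hypotheses of Lemma \ref{lem: exp growth} with $2k$ functions and growth exponent $4d+1$ playing the roles of $k$ and $d$ there.

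Next I apply Lemma \ref{lem: exp growth} with the proposition's $\sigma=\tfrac{1}{2}e^{(8d+2)\delta}$. Using $\log\sigma=2(4d+1)\delta-\log 2$, the lemma's hypothesis $\sigma>e^{\frac{2k}{2k-\ell+1}(4d+1)\delta}$ rearranges to
\[
2k-\ell+1 \;>\; \frac{2k(4d+1)\delta}{2(4d+1)\delta-\log 2},
\]
and the $-\log 2$ on the right, coming from the factor $\tfrac{1}{2}$ in $\sigma$, is exactly what creates enough slack to accommodate some $\ell\ge\sigma^{-1}k$. Fix such an $\ell$; the lemma then returns indices $\alpha_1,\dots,\alpha_\ell\in\{1,\dots,2k\}$ and infinitely many integers $m$ for which $f_{\alpha_i}((m+1)\delta)\le\sigma f_{\alpha_i}(m\delta)$ for every $i$. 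I pick one such $m>m_0$ and set $r=(m+1)\delta$.

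Finally, I set
\[
v_i \;=\; \frac{w_{\alpha_i,r}}{\sqrt{f_{\alpha_i}(r)}},\qquad i=1,\dots,\ell,
\]
which lies in the span of $u_1,\dots,u_{2k}$ by the construction in Definition \ref{def: f_i}. Since the $\tilde w_{j,r}$ are pairwise $J_r$-orthogonal (by the Gram--Schmidt step) with squared $J_r$-norm $f_j(r)$, the restrictions $v_i|_{Q_r}=\tilde w_{\alpha_i,r}/\sqrt{f_{\alpha_i}(r)}$ form a $J_r$-orthonormal family, giving $J_{(m+1)\delta}(v_i,v_j)=\delta_{ij}$. For the lower bound, Lemma \ref{lem: properties of f}(2) gives $I_{w_{\alpha_i,r}}(m\delta)\ge f_{\alpha_i}(m\delta)$, hence
\[
I_{v_i}(m\delta) \;=\; \frac{I_{w_{\alpha_i,r}}(m\delta)}{f_{\alpha_i}(r)} \;\ge\; \frac{f_{\alpha_i}(m\delta)}{f_{\alpha_i}((m+1)\delta)} \;\ge\; \sigma^{-1}.
\]

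The main obstacle is the middle step: verifying that $\ell\ge\sigma^{-1}k$ is compatible with the hypothesis of Lemma \ref{lem: exp growth}. This is a careful but self-contained manipulation of exponential inequalities in which the precise constants in the definition of $\sigma$ and in the growth bound of Lemma \ref{lem: properties of f}(1) play a decisive role. The rest is orthogonality bookkeeping combined with the slow-growth guarantee just obtained.
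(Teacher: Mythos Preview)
Your approach is not the paper's, and the step you flag as the ``main obstacle'' is in fact a genuine gap that cannot be closed for the full range of $\delta>0$ in the proposition.

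You try to obtain the $\ell$ good functions \emph{directly} as normalized Gram--Schmidt residuals, by applying Lemma~\ref{lem: exp growth} with the proposition's $\sigma$ and with some $\ell\ge\sigma^{-1}k$. But write $A=(4d+1)\delta$, so that $\sigma=\tfrac12 e^{2A}$. The lemma's hypothesis (with $2k$ functions and exponent $4d+1$) rearranges, as you say, to
\[
2k-\ell+1 \;>\; \frac{2kA}{2A-\log 2},
\qquad\text{equivalently}\qquad
\ell \;<\; 1+2k\,\frac{A-\log 2}{2A-\log 2}.
\]
For $A\le\log 2$ the right side is at most $1$, while $\sigma^{-1}k=2e^{-2A}k\ge k/2$; so for $k\ge 2$ there is no integer $\ell$ satisfying both $\ell\ge\sigma^{-1}k$ and the lemma's hypothesis. (Even at the boundary $A=\log 2$, where $\sigma=1$ and you would need $\ell=k$, the hypothesis becomes $1>\frac{2k}{k+1}$, which fails.) Thus the ``slack'' from the factor $\tfrac12$ is not enough: your direct-normalization strategy simply cannot deliver the required $\ell$ when $\delta$ is small.

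The paper avoids this by a two-stage construction. First it applies Lemma~\ref{lem: exp growth} only at $\ell_{\text{lemma}}=k$ and with $2\sigma$ in place of $\sigma$; the hypothesis then reduces to $e^{2A}>e^{\frac{2k}{k+1}A}$, which holds for every $\delta>0$. This yields $k$ residuals $w_{\alpha_i,(m+1)\delta}$ with controlled growth. Second, on their $k$-dimensional span it simultaneously diagonalizes $J_{m\delta}$ against $J_{(m+1)\delta}$ to produce $J_{(m+1)\delta}$-orthonormal $v_1,\dots,v_k$, and then uses the basis-independence of the trace of $J_{m\delta}$ together with a pigeonhole count to show that at least $\sigma^{-1}k$ of these $v_i$ satisfy $I_{v_i}(m\delta)\ge\sigma^{-1}$. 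The diagonalization and trace step is exactly the missing idea in your argument: it converts an \emph{average} growth bound on $k$ functions into the required \emph{individual} bound on $\ell\ge\sigma^{-1}k$ of them, without ever asking Lemma~\ref{lem: exp growth} for more than $k$ indices.
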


\begin{proof}
  We apply Lemma \ref{lem: exp growth} with $f_i$ as in Definition \ref{def: f_i} and with $2k$, $k$, $4d+1$ and $2\sigma = e^{(8d+2)\delta}$ in place of $k$, $\ell$, $d$ and $\sigma$. Then there exist integers $m>m_0$ and $k$ integers $\alpha_1 ,..., \alpha_k \in \{1,...,2k\}$
  such that for $i = 1,...,k$,
  \begin{equation}\label{eqn: construction}
    f_{\alpha_i}((m+1)\delta) \leq 2\sigma f_{\alpha_i}((m\delta)).
  \end{equation}

  By Lemma \ref{lem: properties of f}, we can choose $m_0$ large enough so that when $m>m_0$, $f_{\alpha_i}(m\delta)> 0$. It then follows that $w_{\alpha_1, (m+1)\delta}$, ..., $w_{\alpha_k, (m+1)\delta}$ are linearly independent. Let $V$ be the linear span of these functions. On the $k$-dimensional vector space $V$, $J_{(m+1)\delta}$ defines an inner product and $J_{m\delta}$ is a positive semi-definite bilinear form. Therefore we can diagonalize $J_{m\delta}$
  with respect to $J_{(m+1)\delta}$. In particular, we can find $v_1,..., v_k\in V$
  such that $J_{(m+1)\delta}(v_i,v_j) = \delta_{ij}$ for $i,j = 1,...,k$ and $J_{m\delta}(v_i, v_j) = 0$ when $i\not = j$. Since the trace of $J_{m\delta}$ is independent of the basis, we compute it with respect to the bases $\{w_{\alpha_i, (m+1)\delta}/f_{\alpha_i}((m+1)\delta)\}$ and $\{v_i\}$.

  By Lemma \ref{lem: properties of f}(2) and inequality (\ref{eqn: construction}), we have,
  \begin{equation}\label{eqn: trace bd}
    \sum_{i=1}^k I_{v_i}(m\delta) = \sum_{i=1}^k \frac{I_{w_{\alpha_i, (m+1)\delta}}(m\delta)}{f_{\alpha_i}((m+1)\delta)} \geq \frac{f_{\alpha_i}(m\delta)}{f_{\alpha_i}((m+1)\delta)} \geq 2\sigma^{-1}k.
  \end{equation}

  Note that $0 < I_{v_i}(m\delta) \leq I_{v_i}((m+1)\delta) = 1$. Suppose $\ell$ is the number of the $v_i$'s such that
  \begin{equation}
    \sigma^{-1} \leq I_{v_i}(m\delta).
  \end{equation}
  Then by (\ref{eqn: trace bd}), $(k-\ell)\sigma^{-1} + \ell \geq 2\sigma^{-1}k$. Therefore $\ell > \sigma^{-1}k$.
\end{proof}

\section{Finite Dimensionality}

Let $P_r(t,x) = (t-r^2, t] \times B_r(x)$ be the parabolic ball in $\R \times \R^{n+1}$. We will need the following parabolic mean value inequality. (see, e.g. \cite{L96} Theorem 6.17). Suppose $(\partial_t - L)u = 0$ on $P_r(t,x)$, then
\begin{equation}\label{eqn: mean val ineq}
  |u(t,x)|^2 \leq \frac{C}{r^{n+3}}\int_{P_r(t,x)} u^2
\end{equation}
for some constant $C = C(n,\lambda, \Lambda)$.

\begin{proposition}\label{prop: dimension bound}
  Fix $0 < \delta \leq 1$, $a>0$ and suppose $v_1$, ...,$v_\ell\in E_d$ are $J_{a+\delta}$-orthonormal. Then
  \begin{equation}
    \sum_{i=1}^\ell I_{v_i}(a) \leq C \delta^{-(n+2)}
  \end{equation}
  for some constant $C = C(n, \lambda, \Lambda, V_0)$.
\end{proposition}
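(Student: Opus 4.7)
Let $K(t,x) := \sum_{i=1}^\ell v_i(t,x)^2$. Since the trace of the bilinear form $J_a$ with respect to any $J_{a+\delta}$-orthonormal basis of $V_\ell := \mathrm{span}(v_i)$ is basis-independent, we have $\sum_{i=1}^\ell I_{v_i}(a) = \int_{Q_a} K(t,x)\,dt\,dx$, and dually $K$ admits the pointwise characterization $K(t,x) = \sup\{v(t,x)^2 : v \in V_\ell,\ J_{a+\delta}(v,v) = 1\}$. The plan is to bound $K$ pointwise using the mean value inequality, and then control $\int_{Q_a} K$ uniformly in $a$ by combining with the reverse Poincar\'e estimate of Lemma \ref{Lem: rev Poincare}.

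The first step is to apply \eqref{eqn: mean val ineq} to an $J_{a+\delta}$-unit optimizer at each point. Whenever the parabolic $\delta$-ball $P_\delta(t,x)$ is contained in $\R^-\times\Omega$, the mean value inequality gives $v(t,x)^2 \le C\delta^{-(n+3)}J_{a+\delta}(v,v) = C\delta^{-(n+3)}$, so taking the supremum yields $K(t,x)\le C\delta^{-(n+3)}$. For points $(t,x)$ whose parabolic $\delta$-ball crosses the lateral Dirichlet boundary $\R^-\times\partial\Omega$, we instead exploit that $v\equiv 0$ on $\R^-\times\partial\Omega$ together with the gradient version of mean value $|\nabla v|^2\le C\delta^{-(n+5)}$ to obtain the improved bound $v(t,x)^2 \le C\,\mathrm{dist}(x',\partial\Omega_0)^2\,\delta^{-(n+5)}$, which decays quadratically at the transverse boundary.

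Second, na\"ively integrating $K\le C\delta^{-(n+3)}$ over $Q_a$ would introduce an unacceptable factor of $\mathrm{vol}(Q_a)\sim a^3$. To remove the $a$-dependence, apply Lemma \ref{Lem: rev Poincare} to each $v_i$ with $r=a$ and $R=a+\delta$ and sum to obtain
\[
  \sum_{i=1}^\ell I_{v_i}(a) \;\le\; \frac{C}{\delta^2}\int_{Q_{a+\delta}\setminus Q_a} K.
\]
The problem thus reduces to showing the annular estimate $\int_{Q_{a+\delta}\setminus Q_a} K \le C\delta^{-n}$, uniformly in $a$. The annulus has parabolic thickness $\delta$ around the lateral boundary of $Q_a$, and on the interior piece of the annulus the pointwise bound from Step~1 applies. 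To absorb the transverse factor $V_0$ and the longitudinal factor $a^2$, combine this pointwise bound with the slice-wise Friedrichs inequality $\int_{\Omega_0} v_i^2\,dx' \le C(V_0,n)\int_{\Omega_0}|\nabla_{x'} v_i|^2\,dx'$ (valid because $v_i\equiv 0$ on $\partial\Omega_0$), followed by a further application of reverse Poincar\'e to reabsorb the gradient into a slightly larger annulus; the net effect is to exchange one spatial dimension in $\Omega_0$ for one parabolic gradient integration, saving exactly one power of $\delta$ and producing the sharper exponent $n+2$ in place of the raw parabolic scaling exponent $n+3$.

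The main obstacle is the last step above: the careful bookkeeping that converts the combination of pointwise mean value (exponent $n+3$), Friedrichs on $\Omega_0$, and reverse Poincar\'e into a uniformly $a$-independent annular estimate with constant depending only on $n,\lambda,\Lambda,V_0$. In particular, one must ensure that the Dirichlet decay at $\partial\Omega_0$ eliminates any dependence on the geometry of $\partial\Omega_0$ beyond the cross-sectional volume $V_0$, and that the $a^2$ scaling of the annulus's lateral extent is absorbed rather than propagated. Once this is done, Steps 2 and 3 combine to give the desired $\sum_{i=1}^\ell I_{v_i}(a) \le C\delta^{-(n+2)}$.
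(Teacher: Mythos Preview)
Your overall setup---defining the Bergman-type kernel $K$, reducing its pointwise estimation to a single $J_{a+\delta}$-unit element via trace invariance, and then combining a mean-value bound with the reverse Poincar\'e lemma---matches the paper's strategy. But the execution diverges in two places, and in both your version has a genuine gap.

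First, you apply the mean-value inequality with the fixed radius $\delta$ and then try to bound $\int_{Q_{a+\delta}\setminus Q_a} K$. The trouble is that for points in this \emph{outer} annulus the parabolic ball $P_\delta(t,x)$ need not sit inside $Q_{a+\delta}$: near the outer boundary in the $x_0$- and $t$-directions there is simply no room, and no zero-extension is available there. (Extending by zero across $\partial\Omega_0$ is fine and is exactly what the paper does, so your separate treatment of the lateral Dirichlet boundary via a gradient estimate is unnecessary; the boundary you should be worrying about is the wrong one.) The paper avoids this by using a \emph{variable} radius: at a point of parabolic radius $r$ it applies the mean-value inequality with radius $a+\delta-r$, obtaining $K(t,x)\le C(a+\delta-r)^{-(n+3)}$, and it integrates this over the \emph{inner} annulus $Q_a\setminus Q_{a/2}$, where these radii stay bounded below by $\delta$.

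Second, the mechanism that drops the exponent from $n+3$ to $n+2$ in the paper is elementary: it is the one-variable integral
\[
\int_{a/2}^{a}(a+\delta-r)^{-(n+3)}\,r^{2}\,dr\;\le\;C\,a^{2}\,\delta^{-(n+2)},
\]
after which the reverse Poincar\'e inequality at scales $a/2$ and $a$ supplies the compensating factor $a^{-2}$. Your proposed route---Friedrichs on $\Omega_0$-slices, a gradient mean-value bound, and a second reverse Poincar\'e---is not fleshed out, and as written it does not recover the correct power: with the fixed-$\delta$ pointwise bound and $|Q_{a+\delta}\setminus Q_a|\sim a^{2}\delta$, your Step~2 yields $\sum_i I_{v_i}(a)\le C\,a^{2}\delta^{-(n+4)}$, off by a factor $a^{2}\delta^{-2}$, and there is no transparent way the slice Friedrichs recovers two powers of $\delta$ while simultaneously killing the $a^{2}$. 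The paper's variable-radius integration over an inner annulus is precisely the missing idea.
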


\begin{proof}
  Let $V$ be the linear span of $v_1$, ... $v_\ell$. For each $(t, x)\in Q_{a+\delta}$, we define
  \begin{equation}
    K(t,x) = \sum_{i=1}^\ell |v_i|^2(t,x).
  \end{equation}
  Since each $v_i \in L^2_{loc}(\R^- \times \Omega)$, $K(t,x)$ is finite by the mean value inequality. Moreover, it is the trace of a bilinear form $(v, w) \to v(t, x)w(t, x)$ on $V$. Therefore, we can diagonalize it by an orthogonal change of basis. Let $w_1$, ..., $w_\ell$ are $J_{a+\delta}$-orthonormal basis that diagonalizes $K(t, x)$, then there exists at most one $w_i$ that is not zero at $(t,x)$. Without the loss of generallity, let it be $w_1$. Since $K(t,x)$ is the trace, we have
  \begin{equation}
    K(t,x) = w_1^2(t,x).
  \end{equation}

  Now we use mean value inequality to bound $K$ pointwise. Note that if $\rho >r >0$ and $(t,x)\in Q_r$, then $P_{(\rho-r)}(t,x)\cap (\R^-\times \Omega) \subset Q_\rho$. We extend $w_1$ outside $\R^- \times \Omega$ by zero and apply the mean value inequality (\ref{eqn: mean val ineq}) to $w_1$ at $(t,x) \in Q_r$. We get
  \begin{equation}
    \begin{aligned}
      K(t,x) = w_1^2(t,x) &\leq \frac{C}{(a+\delta - r)^{n+3}}\int_{P_{a+\delta - r}(t,x)} w_1^2 \\
      &\leq \frac{C}{(a+\delta - r)^{n+3}}\int_{Q_{a+\delta}} w_1^2 \\
      &= C(a+\delta - r)^{-(n+3)}.
    \end{aligned}
  \end{equation}

  One one hand, integrating $K(t,x)$ over $Q_{a} \setminus Q_\frac{a }{2}$, we obtain
  \begin{equation}\label{eqn: bound annulus}
    \begin{aligned}
      \sum_{i = 1}^\ell \int_{Q_{a} \setminus Q_\frac{a }{2}} v_i^2 &= \int_{Q_{a}\setminus Q_\frac{a }{2}} K(t,x) \\
      &\leq CV_0 \int_\frac{a }{2}^a (a+\delta - r)^{-(n+3)} r^2 \,dr \\
      &\leq C'a^2 \delta^{-(n+2)}.
    \end{aligned}
  \end{equation}
  On the other hand, Lemma \ref{lem: exp growth} asserts that
  \begin{equation}\label{eqn: bound whole thing}
    \int_{Q_a} v_i^2 \leq \frac{C }{a^2} \int_{Q_a\setminus Q_{a/2}} v_i^2.
  \end{equation}

  The proposition follows by combining (\ref{eqn: bound annulus}) and (\ref{eqn: bound whole thing}).
\end{proof}

Theorem \ref{main thm 2} is then an immediate consequence of Proposition \ref{prop: construction} and \ref{prop: dimension bound}.
\begin{proof}(of Theorem \ref{main thm 2})
  Let $u_1$, ..., $u_{2k} \in E_d$ be any set of linearly independent ancient solutions with exponential growth. Set $\delta = \frac{1}{d}$ in Proposition \ref{prop: construction} and $\sigma = \frac{1}{2}e^{8+\frac{1}{2d}} \leq e^{10}$ since $d\geq 1$. There then exists $m>0$, $\ell > \sigma^{-1}k \geq e^{-10}k$
  and $J_{(m+1)\delta}$-orthonormal functions $v_1, ..., v_\ell$ such that for $i=1,... \ell$
  \begin{equation}
    e^{-10} \leq \sigma^{-1} \leq I_{v_i}(m\delta).
  \end{equation}
  Therefore,
  \begin{equation}\label{eqn: bound dim 1}
    \sum_{i=1}^\ell I_{v_i}(m\delta) \geq e^{-10}\ell \geq e^{-20}k.
  \end{equation}
  But Proposition \ref{prop: dimension bound} implies that
  \begin{equation}\label{eqn: bound dim 2}
    \sum_{i=1}^\ell I_{v_i}(m\delta) \leq C\delta^{-(n+2)} = Cd^{n+2}.
  \end{equation}

  Combining (\ref{eqn: bound dim 1}) and (\ref{eqn: bound dim 2}), we have $\dim E_d \leq Cd^{n+2}$ for some constant $C = C(n,\lambda, \Lambda, V_0)$.
\end{proof}


\begin{bibdiv}
\begin{biblist}

  \bib{ChY75}{article}{
   author={Cheng, S. Y.},
   author={Yau, S. T.},
   title={Differential equations on Riemannian manifolds and their geometric
   applications},
   journal={Comm. Pure Appl. Math.},
   volume={28},
   date={1975},
   number={3},
   pages={333--354},
   issn={0010-3640},
   review={\MR{385749}},
   doi={10.1002/cpa.3160280303},
   }

  \bib{CM97}{article}{
     author={Colding, Tobias H.},
     author={Minicozzi, William P., II},
     title={Harmonic functions on manifolds},
     journal={Ann. of Math. (2)},
     volume={146},
     date={1997},
     number={3},
     pages={725--747},
     issn={0003-486X},
     review={\MR{1491451}},
     doi={10.2307/2952459},
  }

  \bib{CM98}{article}{
    author={Colding, Tobias H.},
    author={Minicozzi, William P., II},
    title={Weyl type bounds for harmonic functions},
    journal={Invent. Math.},
    volume={131},
    date={1998},
    number={2},
    pages={257--298},
    issn={0020-9910},
    review={\MR{1608571}},
    doi={10.1007/s002220050204},
    }

  \bib{CM19}{article}{
        title={Optimal bounds for ancient caloric functions},
        author={Colding, Tobias Holck},
        author={Minicozzi, William P., II},
        year={2019},
        eprint={1902.01736},
        archivePrefix={arXiv},
        primaryClass={math.DG}
    }

  \bib{CM20}{article}{
	     Author = {Colding, Tobias Holck},
       Author = {Minicozzi, William P.},
	     Da = {2020/05/15},
	     Date-Added = {2020-07-07 08:51:05 -0400},
	     Date-Modified = {2020-07-07 08:51:05 -0400},
	     Doi = {10.1007/s10240-020-00117-x},
	     Id = {Colding2020},
	     Isbn = {1618-1913},
	     Journal = {Publications math{\'e}matiques de l'IH{\'E}S},
	     Title = {Complexity of parabolic systems},
	     Ty = {JOUR},
	     Url = {https://doi.org/10.1007/s10240-020-00117-x},
	     Year = {2020},
	     Bdsk-Url-1 = {https://doi.org/10.1007/s10240-020-00117-x}
  }

  \bib{H19}{article}{
      title={Dimensional Bounds for Ancient Caloric Functions on Graphs},
      ISSN={1687-0247},
      url={http://dx.doi.org/10.1093/imrn/rnz292},
      DOI={10.1093/imrn/rnz292},
      journal={International Mathematics Research Notices},
      publisher={Oxford University Press (OUP)},
      author={Hua, Bobo},
      year={2019},
      month={Dec}
  }

  \bib{H20}{article}{
  	author = {Huang, Xian-Tao},
  	date = {2020},
  	doi = {10.1007/s10711-020-00520-y},
  	isbn = {1572-9168},
  	journal = {Geometriae Dedicata},
  	title = {Counting dimensions of L-harmonic functions with exponential growth},
  	url = {https://doi.org/10.1007/s10711-020-00520-y},
  	year = {2020},
  }

  \bib{HL99}{article}{
     author={Hang, Fengbo},
     author={Lin, Fanghua},
     title={Exponential growth solutions of elliptic equations},
     journal={Acta Math. Sin. (Engl. Ser.)},
     volume={15},
     date={1999},
     number={4},
     pages={525--534},
     issn={1000-9574},
     review={\MR{1756781}},
     doi={10.1007/s10114-999-0084-2},
  }

  \bib{L96}{book}{
   author={Lieberman, Gary M.},
   title={Second order parabolic differential equations},
   publisher={World Scientific Publishing Co., Inc., River Edge, NJ},
   date={1996},
   pages={xii+439},
   isbn={981-02-2883-X},
   review={\MR{1465184}},
   doi={10.1142/3302},}

   \bib{LY86}{article}{
      AUTHOR = {Li, Peter},
      AUTHOR = {Yau, Shing-Tung},
      TITLE = {On the parabolic kernel of the {S}chr\"{o}dinger operator},
      JOURNAL = {Acta Math.},
      FJOURNAL = {Acta Mathematica},
      VOLUME = {156},
      YEAR = {1986},
      NUMBER = {3-4},
      PAGES = {153--201},
      ISSN = {0001-5962},
      MRCLASS = {58G11 (35J10)},
      MRNUMBER = {834612},
      MRREVIEWER = {Harold Donnelly},
      DOI = {10.1007/BF02399203},
      URL = {https://doi.org/10.1007/BF02399203},
    }

  \bib{LZ19}{article}{
     author={Lin, Fanghua},
     author={Zhang, Q. S.},
     title={On ancient solutions of the heat equation},
     journal={Comm. Pure Appl. Math.},
     volume={72},
     date={2019},
     number={9},
     pages={2006--2028},
     issn={0010-3640},
     review={\MR{3987724}},
     doi={10.1002/cpa.21820},
  }

  \bib{SZ06}{article}{
     author={Souplet, Philippe},
     author={Zhang, Qi S.},
     title={Sharp gradient estimate and Yau's Liouville theorem for the heat
     equation on noncompact manifolds},
     journal={Bull. London Math. Soc.},
     volume={38},
     date={2006},
     number={6},
     pages={1045--1053},
     issn={0024-6093},
     review={\MR{2285258}},
     doi={10.1112/S0024609306018947},
  }

  \bib{Yau93}{article}{
   author={Yau, Shing-Tung},
   title={Open problems in geometry},
   conference={
      title={Differential geometry: partial differential equations on
      manifolds},
      address={Los Angeles, CA},
      date={1990},
   },
   book={
      series={Proc. Sympos. Pure Math.},
      volume={54},
      publisher={Amer. Math. Soc., Providence, RI},
   },
   date={1993},
   pages={1--28},
   review={\MR{1216573}},
   }

  \bib{Yau75}{article}{
      author={Yau, Shing Tung},
      title={Harmonic functions on complete Riemannian manifolds},
      journal={Comm. Pure Appl. Math.},
      volume={28},
      date={1975},
      pages={201--228},
      issn={0010-3640},
      review={\MR{431040}},
      doi={10.1002/cpa.3160280203},
    }

\end{biblist}
\end{bibdiv}

\end{document}